\numberwithin{equation}{section}
\newtheorem{lemma}{Lemma}[section]
\newtheorem{theorem}{Theorem}[section]
\newcommand{\lgn}{\frac{\log{\log{n}}}{n}}
\newcommand{\elg}{\frac{1}{\log{n}}}
\newcommand{\Exp}{\textup{E}}
\newcommand{\bfl}{\begin{flalign*}}
\newcommand{\efl}{\end{flalign*}}
\newcommand{\cont}{C\left([-\pi,\pi]\right)}
\newcommand{\diff}{C^1\left([-\pi,\pi]\right)}
\begin{document}

\title[Level crossings of a random algebraic polynomial]{The K-level crossings of a random algebraic polynomial with dependent coefficients}
\author[J. Matayoshi]{Jeffrey Matayoshi}
\address{Department of Mathematics\\
340 Rowland Hall\\
University of California, Irvine\\
Irvine, CA 92697-3875}
\email{jsmatayoshi@gmail.com}
\keywords{Random polynomials, level-crossings, dependent coefficients}
\subjclass[2000]{Primary 60H99; Secondary 26C10}
\thanks{This research was partially supported by NSF grant DMS-0706198}

\begin{abstract}
For a random polynomial with standard normal coefficients, two cases of the $K$-level crossings have been considered by Farahmand.  When the coefficients are independent, Farahmand was able to derive an asymptotic value for the expected number of level crossings, even if $K$ is allowed to grow to infinity.  Alternatively, it was shown that when the coefficients have a constant covariance, the expected number of level crossings is reduced by half.  In this paper we are interested in studying the behavior for dependent standard normal coefficients where the covariance is decaying and no longer constant.  Using techniques similar to those of Farahmand, we will be able to show that for a wide range of covariance functions behavior similar to the independent case can be expected.
\end{abstract}

\maketitle

\section{Introduction}

For the random polynomial given by 
\begin{equation} \label{poly}
P_n(x)=\sum_{k=0}^{n}{X_{k}x^k},
\end{equation}
consider the problem of computing the expected number of real zeros for the equation $P_n(x)=K$, where $K$ is a given constant.  These are known as the $K$-level crossings of $P_n(x)$.  For standard normal coefficients, Farahmand considered two separate cases in \cite{Farahmand86} and \cite{Farahmand862}.  The first assumes the coefficients are independent.  Here, Farahmand derived an asymptotic value for the expected number of level crossings, for both $K$ bounded and $K$ growing with $n$.  The second case deals with dependent coefficients with a constant covariance $\rho$, where $\rho \in (0,1)$.  What Farahmand showed here was that the constant covariance causes the expected number of level crossings to be reduced by half.  With that in mind, the goal of this paper is to further study the case of dependent coefficients.  We are interested in the behavior of the crossings when there is some decay of the covariance between the coefficients.  

The setup for this problem will be as follows.  Let $X_0, X_1, \ldots$ be a stationary sequence of normal random variables, where the covariance function is given by
\[
\Gamma(k) = \Exp\left[X_0X_k\right], \qquad \Gamma(0) = 1.
\]
Similar to our investigation in \cite{Matayoshi}, we will express $\Gamma(k)$ using the spectral density.  That is,
\begin{equation} \label{cov}
\Gamma(k)=\int_{-\pi}^{\pi}e^{-ik\phi}f(\phi)d\phi,
\end{equation}
where $f(\phi)$ is the spectral density of the covariance function (in addition to the discussion in \cite{Matayoshi}, see \cite{Brei} and \cite{CramLead} for further references).  By imposing certain conditions on the spectral density, for the random polynomial $P_n(x)$ given by \eqref{poly}, we will be able to study the level crossings for a wide range of covariance functions.

Our work will cover two different assumptions on $K$, similar to those considered by Farahmand.  As long as the spectral density has nice enough properties, similar behavior to the independent case can be expected.  Assuming $K$ is bounded, if we require that the spectral density is positive and in $\cont$, we will be able to show that the expected number of level crossings will behave asymptotically like $\frac{2}{\pi}\log{n}$ as $n\rightarrow \infty$.  On the other hand, if $K$ is allowed to grow along with $n$, such that $K=o\left(\sqrt{\frac{n}{\log{\log{n}}}}\right)$, and if the spectral density is positive and in $\diff$, the expected number of crossings in the interval $(-1,1)$ is reduced.  These results will be proved using the techniques developed by Farahmand in \cite{Farahmand86} and \cite{Farahmand862}, as well as the spectral density of the covariance function.  We will also make use of several results from \cite{Matayoshi}, which in turn draws heavily from the work of Sambandham in \cite{Sam77}.  Letting $N_K(\alpha,\beta)$ be the number of $K$-level crossings of $P_n(x)$ in the interval $(\alpha,\beta)$, the main theorem is formulated as follows.
\begin{theorem} \label{Kcrossings}
Assume that the spectral density exists and is strictly positive.
\renewcommand{\theenumi}{\roman{enumi}}
\renewcommand{\labelenumi}{(\theenumi)}
\begin{enumerate}
\item For $K$ bounded and $f(\phi)\in C([-\pi,\pi])$ we have
\[
\Exp\left[N_K\left(-1,1\right)\right] = \Exp\left[N_K\left(-\infty,-1\right)+N_K\left(1,\infty\right)\right] \sim \frac{1}{\pi}\log{n}.
\]
\item For $K=o\left(\sqrt{\frac{n}{\log{\log{n}}}}\right)$ and $f(\phi) \in C^1([-\pi,\pi])$ we have
\begin{flalign*}
\Exp\left[N_K\left(-1,1\right)\right] &= \frac{1}{\pi}\log{\frac{n}{K^2}} + O\left(\log{\log{n}}\right),\\
\Exp\left[N_K\left(-\infty,-1\right)+N_K\left(1,\infty\right)\right] &= \frac{1}{\pi}\log{n} + O\left(\log{\log{n}}\right).
\end{flalign*}
\end{enumerate}
\end{theorem}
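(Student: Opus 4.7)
The starting point is the Kac--Rice formula for the Gaussian pair $(P_n(x),P_n'(x))$. Writing
\[
A^2=\Exp[P_n^2],\quad B=\Exp[P_nP_n'],\quad C^2=\Exp[(P_n')^2],\quad \Delta^2=A^2C^2-B^2,
\]
a conditional-Gaussian calculation produces
\[
\Exp[N_K(\alpha,\beta)] = \int_\alpha^\beta \frac{\Delta}{\pi A^2}\exp\!\left(-\frac{K^2 C^2}{2\Delta^2}\right) dx + \int_\alpha^\beta \frac{|KB|}{\sqrt{2\pi}\,A^3}\exp\!\left(-\frac{K^2}{2A^2}\right)\textup{erf}\!\left(\frac{|KB|}{\sqrt{2}\,A\Delta}\right)dx.
\]
The first integral will carry the leading-order asymptotics; the second, using $\textup{erf}\le 1$ together with the spectral estimates below, will be shown to contribute only $O(1)$ and so is absorbed into the error.

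Next, the spectral representation \eqref{cov} converts these second moments into Poisson-type integrals:
\[
A^2(x) = \int_{-\pi}^\pi f(\phi)\,\Big|\sum_{k=0}^n (xe^{-i\phi})^k\Big|^2 d\phi \longrightarrow \int_{-\pi}^\pi \frac{f(\phi)}{1-2x\cos\phi+x^2}\,d\phi \quad (n\to\infty),
\]
with analogous formulas for $B$, $C^2$, $\Delta^2$. Since the Poisson kernel is an approximate identity concentrated at $\phi=0$ as $|x|\uparrow 1$, continuity of $f$ gives $A^2(x)\sim 2\pi f(0)/(1-x^2)$, and the parallel computations yield $\Delta/(\pi A^2)\sim 1/(\pi(1-x^2))$ together with $K^2C^2/(2\Delta^2)\sim cK^2(1-x^2)$ for a positive constant $c$---the $f(0)$ factors cancel in these ratios, which explains why the leading constant $1/\pi$ matches that of the independent case. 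Most of the required estimates are the spectral-density analogues of those already established in \cite{Matayoshi}.

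Now split $(-1,1)$ into a bulk $I_1=[-1+\epsilon_n,1-\epsilon_n]$ and a boundary layer $I_2$, with $\epsilon_n\sim(\log\log n)/n$ chosen so that $\Exp[N_K(I_2)]=O(\log\log n)$; the latter is handled by the crude bound $C^2(x)=O(n^3)$ near $|x|=1$ combined with a Jensen-type inequality in the style of Farahmand. On $I_1$ the main integrand reduces to $\pi^{-1}(1-x^2)^{-1}\exp(-cK^2(1-x^2))(1+o(1))$, and the substitution $t=1-x^2$ converts it to an incomplete exponential integral evaluating to $\pi^{-1}\log n$ for bounded $K$, and to $\pi^{-1}\log(n/K^2)+O(\log\log n)$ when $K\to\infty$ (the Gaussian tail cuts off the logarithmic divergence at $t\sim 1/K^2$). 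For the exterior intervals, the substitution $y=1/x$ carries $P_n(x)=K$ into $\sum_{j=0}^n X_{n-j}y^j = Ky^n$ on $(0,1)$; the reciprocal coefficient sequence is again stationary Gaussian with covariance $\Gamma$, so the same Kac--Rice analysis applies with $Ky^n$ replacing the constant level and yields $\pi^{-1}\log n$ in each direction.

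The main obstacle is obtaining the expansions of $\Delta/A^2$ and $C^2/\Delta^2$ uniformly near $x=\pm 1$ with errors small enough to survive multiplication by the Gaussian tail $\exp(-cK^2(1-x^2))$ even when $K$ grows, and then aggregating all boundary and approximation errors into the stated $O(\log\log n)$. It is precisely this requirement that forces $f\in C^1$ in part (ii): the first-order correction to the Poisson representation of $A^2$ must be controlled at rate $O(1-x^2)$ rather than merely $o(1)$, which mere continuity of $f$ would allow. Fine-tuning $\epsilon_n$ so that boundary and bulk contributions match is the other delicate point.
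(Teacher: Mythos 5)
Your overall strategy --- the Kac--Rice formula, the spectral/Poisson-kernel representation of $A$, $B$, $C$, a bulk/boundary-layer decomposition of $(-1,1)$ at scale $(\log\log n)/n$, and comparison with the real-zero counts of \cite{Matayoshi} --- is the same as the paper's, and your identification of the delicate points (uniformity of the expansions near $\pm1$ against the Gaussian tail, and the role of $f\in C^1$ in part (ii)) is accurate. However, two steps as written have genuine gaps. First, the claim that on all of $I_1=[-1+\epsilon_n,1-\epsilon_n]$ the integrand reduces to $\pi^{-1}(1-x^2)^{-1}\exp(-cK^2(1-x^2))(1+o(1))$ is false in the middle of the interval: the Poisson kernel is an approximate identity only as $|x|\to1$, so for $x$ bounded away from $\pm1$ one has $A(x)=O(1)$ rather than $\sim 2\pi f(0)/(1-x^2)$; moreover the concentration point is $\phi=\pi$ (hence the local constant is $f(\pi)$, not $f(0)$) as $x\to-1$. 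The paper repairs this by inserting an intermediate scale: on $(-1+\frac{1}{\log n},\,1-\frac{1}{\log n})$ it simply bounds $F_1$ by $\pi^{-1}\sqrt{AC-B^2}/A$ (dropping the exponential) and invokes Lemma 2.1 of \cite{Matayoshi} to get $O(\log\log n)$ there, reserving the asymptotic expansions for the annuli $1-\frac{1}{\log n}<|x|<1-\frac{\log\log n}{n}$ only. You need either this extra split or a separate uniform bound on the central region.

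Second, the exterior intervals. Your reduction to the reversed polynomial with moving level $Ky^n$ is a legitimate alternative to the paper's route (which substitutes $x=1/z$ directly inside the Kac--Rice integrand), but as stated it is both incomplete and numerically off: each ray $(1,\infty)$ and $(-\infty,-1)$ contributes $\frac{1}{2\pi}\log n$, since the image of each ray is a single half-interval $(0,1)$ or $(-1,0)$, so ``$\pi^{-1}\log n$ in each direction'' double-counts and would give $\frac{2}{\pi}\log n$ in total rather than the stated $\frac{1}{\pi}\log n$. More substantively, for part (ii) you must explain why no $\log K^2$ reduction occurs outside $(-1,1)$; the paper does this by showing that after the substitution the exponent satisfies $K^2C/(AC-B^2)=O\left(K^2(n+1)^2(1-y)^{2n}y^3\right)=o(1)$ on the relevant range, so the exponential factor is $1+o(1)$ there, and by using the real-zero count of \cite{Matayoshi} as a matching upper bound. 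Note also that a Kac--Rice formula for a nonconstant level carries an extra term involving the derivative of $Ky^n$, which your sketch does not account for.
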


To begin with, using the Kac-Rice formula derived in \cite{Farahmand}, we have
\begin{equation} \label{fullKac}
\begin{split}
\Exp\left[N_K\left(\alpha,\beta\right)\right] &= \frac{1}{\pi}\int_{\alpha}^{\beta}\frac{\sqrt{AC-B^2}}{A}\exp\left(-\frac{K^2C}{2\left(AC - B^2\right)}\right)dx\\
	& \quad + \frac{1}{\pi}\int_{\alpha}^{\beta}\frac{\sqrt{2}|BK|}{A^{3/2}}\exp\left(-\frac{K^2}{2A}\right)\mbox{erf}\left(\frac{|-BK|}{\sqrt{2A\left(AC - B^2\right)}}\right)dx\\
	&= \int_{\alpha}^{\beta}F_1dx + \int_{\alpha}^{\beta}F_2dx,\\
\end{split}
\end{equation}
where
\begin{flalign*}
A(x) &= \textup{E}[P_n^2(x)] = \sum_{k=0}^{n}{\sum_{j=0}^{n}{\Gamma(k-j)x^{k+j}}},\\
B(x) &= \textup{E}[P_n(x)P_n'(x)] = \sum_{k=0}^{n}{\sum_{j=0}^{n}{\Gamma(k-j)kx^{k+j-1}}},\\
C(x) &= \textup{E}[(P_n'(x))^2] = \sum_{k=0}^{n}{\sum_{j=0}^{n}{\Gamma(k-j)kjx^{k+j-2}}}.\\
\end{flalign*}
Applying \eqref{cov} gives us
\begin{flalign*}
A &= \int_{-\pi}^{\pi}{\sum_{k=0}^{n}{\sum_{j=0}^{n}{e^{-i(k-j)\phi}x^{k+j}f(\phi)d\phi}}},\\
B &= \int_{-\pi}^{\pi}{\sum_{k=0}^{n}{\sum_{j=0}^{n}{e^{-i(k-j)\phi}kx^{k+j-1}f(\phi)d\phi}}},\\
C &= \int_{-\pi}^{\pi}{\sum_{k=0}^{n}{\sum_{j=0}^{n}{e^{-i(k-j)\phi}kjx^{k+j-2}f(\phi)d\phi}}}.\\
\end{flalign*}
From (2.3), (2.4), and (2.5) in \cite{Matayoshi}, we have
\begin{flalign*}
A &= \int_{-\pi}^{\pi}\frac{1-x^{n+1}e^{-i(n+1)\phi}}{1-xe^{-i\phi}} \cdot \frac{1-x^{n+1}e^{i(n+1)\phi}}{1-xe^{i\phi}}f(\phi)d\phi,\\
B &= \int_{-\pi}^{\pi}\left(\frac{1-x^{n+1}e^{-i(n+1)\phi}}{1-xe^{-i\phi}}\right)\\
  &  \quad \cdot \left(\frac{-(n+1)x^ne^{i(n+1)\phi}(1-xe^{i\phi})-(1-x^{n+1}e^{i(n+1)\phi})(-e^{i\phi})}{(1-xe^{i\phi})^2}\right)f(\phi)d\phi,\\
\end{flalign*}
and
\begin{flalign*}
C &= \int_{-\pi}^{\pi}\left(\frac{-(n+1)x^ne^{-i(n+1)\phi}(1-xe^{-i\phi})-(1-x^{n+1}e^{-i(n+1)\phi})(-e^{-i\phi})}{(1-xe^{-i\phi})^2}\right)\\
  &  \quad \cdot \left(\frac{-(n+1)x^ne^{i(n+1)\phi}(1-xe^{i\phi})-(1-x^{n+1}e^{i(n+1)\phi})(-e^{i\phi})}{(1-xe^{i\phi})^2}\right)f(\phi)d\phi.\\
\end{flalign*}

\section{Expected Number of Level Crossings on $(-1,1)$}

To prove Theorem \ref{Kcrossings} we will start as Farahmand did in \cite{Farahmand86} and \cite{Farahmand862}.  That is, our first step will be to show that the contribution from the integral of $F_2$ on $(-1,1)$ is negligible.  
\begin{lemma} \label{F2bound}
For $f(\phi)$ continuous and positive we have
\[
\int_{-1}^{1}F_2dx = o(\log{\log{n}}).
\]
\end{lemma}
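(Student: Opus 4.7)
The plan is to reduce $\int_{-1}^{1} F_2\,dx$ to a one-dimensional Gaussian-tail integral via the substitution $u(x) := K/\sqrt{A(x)}$, and then bound the cost by the number of monotone pieces of $A$ on $(-1,1)$. First, using an elementary bound on $\mbox{erf}$ one obtains
\[
F_2(x) \le C\,\frac{|B(x)K|}{A(x)^{3/2}}\,\exp\!\left(-\frac{K^2}{2A(x)}\right)
\]
for an absolute constant $C$. Differentiating the identity $A(x) = \textup{E}[P_n(x)^2]$ gives $A'(x) = 2B(x)$, so $u'(x) = -KB(x)/A(x)^{3/2}$ and the bound becomes $F_2(x) \le C\,|u'(x)|\,e^{-u(x)^2/2}$.

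Next, I would split $(-1,1)$ into the $M$ maximal intervals on which $u$ (equivalently $A$) is monotone. On each such interval $I$, the change of variables $v = u(x)$ yields
\[
\int_I |u'(x)|\,e^{-u(x)^2/2}\,dx = \left|\int_{u(I)} e^{-v^2/2}\,dv\right| \le \sqrt{2\pi},
\]
so that $\int_{-1}^{1} F_2\,dx \le C\,M$. It therefore suffices to show $M = o(\log\log n)$, and in fact one should expect $M = O(1)$. For $f$ positive and continuous, the two-sided comparison
\[
2\pi(\min f)\,A_0(x) \le A(x) \le 2\pi(\max f)\,A_0(x), \qquad A_0(x) = \sum_{k=0}^n x^{2k},
\]
keeps $A$ bounded away from zero uniformly in $n$, and combined with the bulk asymptotic $A(x) \sim 2\pi f(0)/(1-x^2)$ from \cite{Matayoshi}, one can plausibly conclude that $A$ inherits the gross monotonicity structure of $A_0$ (a unique interior minimum near $x = 0$, strict monotonicity on each side), giving $M = O(1)$ and hence $\int_{-1}^{1} F_2\,dx = O(1) = o(\log\log n)$.

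The main obstacle is this last step. Even with the envelope $A \asymp A_0$, the polynomial $A(x)$ of degree $2n$ could in principle acquire additional local extrema that a pointwise comparison cannot detect; ruling them out requires invoking the refined asymptotics of \cite{Matayoshi} for $A$ and $B$, rather than just continuity of $f$. In particular one needs to show that the derivative $A'(x) = 2B(x)$ vanishes only a bounded number of times in $(-1,1)$, which is where the analytical work of the proof will concentrate. If a direct monotonicity argument proves too delicate, a fallback is to partition $(-1,1)$ at a fixed small set of points (say at $0$ and at $\pm(1-1/n)$) and on each sub-interval use the Matayoshi asymptotics to verify monotonicity of $u$ there, which is weaker than a global claim but still gives the bounded number of pieces needed.
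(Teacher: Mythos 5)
Your reduction is correct as far as it goes: since $A(x)=\Exp[P_n^2(x)]$ gives $A'=2B$, setting $u=K/\sqrt{A}$ does yield $|u'|=|KB|/A^{3/2}$, and with $\mathrm{erf}\le 1$ you get $\int_I F_2\,dx\le C\int_I|u'|e^{-u^2/2}\,dx\le C\sqrt{2\pi}$ on any interval $I$ where $u$ is monotone. The problem is the step you yourself flag: bounding the number $M$ of monotone pieces. Nothing in the hypotheses (continuity and positivity of $f$) delivers $M=O(1)$, or even $M=o(\log\log n)$. The two-sided envelope $2\pi(\min f)A_0\le A\le 2\pi(\max f)A_0$ is a pointwise comparison and says nothing about sign changes of $A'=2B$, which is a polynomial of degree $2n-1$ and could a priori vanish many times. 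Your proposed fallback also does not close the gap: the asymptotics available from \cite{Matayoshi} control the sign of $B$ only on ranges such as $x\in(1-\elg,1-\lgn)$; in the bulk the quoted statement is an asymptotic for $|B|$ (not a sign statement, and not one valid uniformly down to $x=0$, where $B(0)=\Gamma(1)$ may vanish), and within $\lgn$ of $\pm1$ no sign information is provided at all. So as written the argument is incomplete at its load-bearing step.

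For comparison, the paper never needs any monotonicity of $A$. It splits $(-1,1)$ at distance $\lgn$ from the endpoints. On the middle piece it bounds the two factors separately and pointwise: $|B|/A^{3/2}\le c(1-|x|)^{-1/2}$ (from $|B|\lesssim A/(1-|x|)$ and $A\ge c_2/(1-x^2)$) and $\exp(-K^2/2A)\le\bigl(1+K^2(1-|x|)/(2c_1)\bigr)^{-1}$ (from $A\le c_1/(1-x^2)$); the resulting integrand has the explicit antiderivative $-2\sqrt{2c_1}\arctan\bigl(K\sqrt{1-x}/\sqrt{2c_1}\bigr)$, giving $O(1)$. On the two end pieces it uses the crude bounds $|B|\le nc_1|x|^{-1}\sum x^{2k}$ and $A\ge c_2\sum x^{2k}$ to get $|B|/A^{3/2}\lesssim (n\log\log n)^{1/2}$, and the interval length $\lgn$ together with $K=o(\sqrt{n/\log\log n})$ makes the contribution $o(\log\log n)$. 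If you want to salvage your route, you would need to prove a quantitative bound on the number of zeros of $B$ in $(-1,1)$ under these hypotheses; absent that, the paper's pointwise-bound strategy is the one that actually closes the lemma.
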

\begin{proof}
Since $f(\phi)$ is a continuous, positive function, we can find constants $c_1,c_2>0$ such that $\frac{c_1}{2\pi}>f(\phi)>\frac{c_2}{2\pi}$ for any $\phi \in[-\pi,\pi]$.  Now, for the interval $(-1+\lgn,1-\lgn)$ we have
\[
A \sim \int_{-\pi}^{\pi}\frac{1}{(1-xe^{-i\phi})(1-xe^{i\phi})}f(\phi)d\phi,
\]
from which we can then derive the lower bound
\begin{equation} \label{lowA}
A \geq \frac{c_2}{2\pi}\int_{-\pi}^{\pi}\frac{1}{(1-xe^{-i\phi})(1-xe^{i\phi})}d\phi = \frac{c_2}{1-x^2} \geq \frac{c_2\left(1-x^{2n+2}\right)}{1-x^2}.
\end{equation}
Using the fact that $f\equiv\frac{1}{2\pi}$ in the independent case, we can derive an upper bound as well, where
\begin{equation} \label{upA}
\begin{split}
A &\leq \frac{c_1}{2\pi}\int_{-\pi}^{\pi}\frac{\left(1 - x^{n+1}e^{-i(n+1)\phi}\right)\left(1 - x^{n+1}e^{i(n+1)\phi}\right)}{(1 - xe^{-i\phi})(1 - xe^{i\phi})}d\phi\\
	&=	  c_1\frac{1-x^{2n+2}}{1-x^2} \leq \frac{c_1}{1-x^2}.
\end{split}
\end{equation}
Notice that this upper bound holds on the entire interval $(-1,1)$.  Next, from equations (3.5) and (3.7) in \cite{Matayoshi} we know that
\[
|B| \sim \int_{-\pi}^{\pi}\left|\frac{e^{i\phi}}{(1 - xe^{-i\phi})(1 - xe^{i\phi})^2}\right|f(\phi)d\phi,
\]
which implies
\begin{equation*}
|B|	\leq \frac{1}{1-|x|}\int_{-\pi}^{\pi}\frac{1}{(1 - xe^{-i\phi})(1 - xe^{i\phi})}f(\phi)d\phi \sim \frac{1}{1-|x|}A.
\end{equation*}
It follows that
\begin{equation*}
\frac{|B|}{A^{3/2}} \leq \frac{1}{1-|x|}\left(\frac{1-x^2}{c_2}\right)^{1/2} \leq \sqrt{\frac{2}{c_2}}\frac{1}{(1-|x|)^{1/2}},											\end{equation*}
and
\begin{equation*}
\exp{\left(\frac{-K^2}{2A}\right)} \leq \frac{1}{1 + \frac{K^2\left(1-x^2\right)}{2c_1}} \leq \frac{1}{1 + \frac{K^2\left(1-|x|\right)}{2c_1}}.
\end{equation*}
Since $\textup{erf}(x)\leq1$, we then have
\begin{equation} \label{F21}
\begin{split}
\int_{-1+\lgn}^{1-\lgn}F_2 dx &\leq \sqrt{\frac{2}{c_2}}\int_{-1+\lgn}^{1-\lgn}\frac{|K|(1-|x|)^{-1/2}}{1 + \frac{K^2(1-|x|)}{2c_1}}dx\\
	&= 2\sqrt{\frac{2}{c_2}}\int_{0}^{1-\lgn}\frac{|K|(1-x)^{-1/2}}{1 + \frac{K^2(1-x)}{2c_1}}dx\\
	&= -2\sqrt{2c_1}\arctan{\left(\frac{K\sqrt{1-x}}{\sqrt{2c_1}}\right)}\bigg|_0^{1-\lgn}\\
	&= O(1).
\end{split}
\end{equation}
Next, for $x\in(-1,-1+\lgn)\cup(1-\lgn,1)$,
\begin{equation*}
|B| \leq \frac{n}{|x|}\sum_{k=0}^{n}{\sum_{j=0}^{n}{\Gamma(k-j)|x|^{k+j}}} \leq \frac{nc_1}{|x|}\sum_{k=0}^{n}x^{2k},
\end{equation*}
by \eqref{upA}.  Also,
\begin{equation*}
A \geq \frac{c_2}{2\pi}\int_{-\pi}^{\pi}\frac{\left(1 - x^{n+1}e^{-i(n+1)\phi}\right)\left(1 - x^{n+1}e^{i(n+1)\phi}\right)}{(1 - xe^{-i\phi})(1 - xe^{i\phi})}d\phi = c_2\sum_{k=0}^{n}x^{2k},
\end{equation*}
from which it then follows that
\begin{flalign*}
\frac{|B|}{A^{3/2}} &\leq    nc\left(\sum_{k=0}^{n}x^{2k}\right)^{-1/2}\\
										&\leq    nc\left(\sum_{k=0}^{n}\left(1-\lgn\right)^{2k}\right)^{-1/2}\\
										&\sim    c\left(n\log{\log{n}}\right)^{1/2}.
\end{flalign*}
Thus,
\begin{flalign*}
\frac{\sqrt{2}}{\pi}\int_{1-\lgn}^1F_2 &\leq \frac{\sqrt{2}}{\pi}\int_{1-\lgn}^{1}\frac{|KB|}{A^{3/2}}\\
		&\leq {\pi}\int_{1-\lgn}^{1}c|K|\left(n\log{\log{n}}\right)^{1/2}\\
		&= o\left(\log{\log{n}}\right).
\end{flalign*}
Similarly,
\[
\frac{\sqrt{2}}{\pi}\int_{-1}^{-1+\lgn}F_2 = o(\log{\log{n}}),
\]
which proves the claim.
\end{proof}
We will next show that the expected number of crossings on the intervals $(0,1-\elg)$, $(1-\frac{\log{\log{n}}}{n},1)$, $(-1+\elg,0)$ and $(-1,-1+\frac{\log{\log{n}}}{n})$ is negligible.
\begin{lemma} \label{Kbound}
Assume $f(\phi)$ is continuous and positive.  For the intervals $(-1,-1+\frac{\log{\log{n}}}{n})$, $(-1+\elg,0)$, $(0,1-\elg)$, and $(1-\frac{\log{\log{n}}}{n},1)$, the expected number of crossings is $O(\log{\log{n}})$.
\end{lemma}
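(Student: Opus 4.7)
The plan is to bound the $F_1$ contribution on each of the four subintervals separately; Lemma \ref{F2bound} already supplies $\int_{-1}^1 F_2\,dx = o(\log\log n)$. Throughout, I will use Cauchy--Schwarz in the form $B^2 \leq AC$ (which holds since $(A,B,B,C)$ is a Gaussian covariance matrix), giving $F_1 \leq \frac{1}{\pi}\sqrt{C/A}$. The argument naturally splits: the ``middle'' intervals $(0,1-\elg)$ and $(-1+\elg,0)$ are handled using the asymptotic expressions for $A$ and $C$, since $x^{n+1}$ is negligible there, while the ``edge'' intervals $(1-\lgn,1)$ and $(-1,-1+\lgn)$ require direct estimation of the sums defining $A$ and $C$ via Parseval.

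For $x \in (0, 1-\elg)$ we have $x^{n+1} \leq e^{-n/\log n} = o(1)$, so the asymptotic representation from \cite{Matayoshi} used in the proof of Lemma \ref{F2bound} applies and gives $A \sim \int_{-\pi}^{\pi} f(\phi)/|1-xe^{i\phi}|^2\,d\phi$ and analogously $C \sim \int_{-\pi}^{\pi} f(\phi)/|1-xe^{i\phi}|^4\,d\phi$. Combining the lower bound $A \geq c_2/(1-x^2)$ from \eqref{lowA} with the explicit identity $\int_{-\pi}^{\pi} d\phi/|1-xe^{i\phi}|^4 = 2\pi(1+x^2)/(1-x^2)^3$ and the pointwise bound $f \leq c_1/(2\pi)$ yields $C \leq c_1(1+x^2)/(1-x^2)^3$. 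Therefore $\sqrt{C/A} = O(1/(1-x^2))$ and
\[
\int_0^{1-\elg} \frac{dx}{1-x^2} = \frac{1}{2}\log\frac{2-\elg}{\elg} = O(\log\log n).
\]
The interval $(-1+\elg,0)$ is treated identically after replacing $x$ with $|x|$.

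On the edge interval $(1-\lgn,1)$, the asymptotic representation no longer applies, so I would instead use Parseval's identity: with $G_n(x,\phi) = \sum_{k=0}^n x^k e^{ik\phi}$ and $H_n(x,\phi) = \sum_{k=0}^n kx^{k-1}e^{ik\phi}$, we have $A = \int f|G_n|^2\,d\phi \geq c_2 \sum_{k=0}^n x^{2k}$ and $C = \int f|H_n|^2\,d\phi \leq c_1 \sum_{k=0}^n k^2 x^{2k-2}$. I then split $(1-\lgn,1)$ at $x = 1-1/n$. On $(1-\lgn, 1-1/n)$ the condition $n(1-x) \to \infty$ gives the geometric-sum estimates $\sum x^{2k} \asymp 1/(1-x)$ and $\sum k^2 x^{2k-2} \asymp 1/(1-x)^3$, so $\sqrt{C/A} = O(1/(1-x))$ and
\[
\int_{1-\lgn}^{1-1/n} \frac{dx}{1-x} = \log(n\cdot\lgn) = \log\log\log n = o(\log\log n).
\]
On the remaining piece $(1-1/n,1)$, every $x^{2k}$ for $k\leq n$ is bounded below by a positive constant, giving $\sum x^{2k} \asymp n$ and $\sum k^2 x^{2k-2} \asymp n^3$, so $\sqrt{C/A} = O(n)$ and the integral contributes $O(1)$. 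The interval $(-1,-1+\lgn)$ is handled by the same estimates with $x$ replaced by $|x|$.

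The main technical point is controlling the transition near $1-x \sim 1/n$, where $\sum x^{2k}$ crosses over from the ``saturated'' regime $\asymp n$ to the geometric regime $\asymp 1/(1-x)$; this forces the splitting of the edge interval at $x = 1-1/n$. The Cauchy--Schwarz bound $F_1 \leq \frac{1}{\pi}\sqrt{C/A}$ is crude and loses the cancellation between $AC$ and $B^2$ that would be needed for a sharp leading constant, but it suffices here because each of the four subintervals contributes at most $O(\log\log n)$.
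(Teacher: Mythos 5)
Your proof is correct, and it shares the paper's one essential idea --- discard the exponential factor via $\exp\left(-\frac{K^2C}{2(AC-B^2)}\right)\leq 1$ --- but then diverges in how the remaining integral is controlled. The paper observes that after this step $\frac{1}{\pi}\int\frac{\sqrt{AC-B^2}}{A}\,dx$ is exactly the Kac--Rice density for the real zeros of $P_n$, and simply cites Lemma 2.1 of \cite{Matayoshi} for the $O(\log\log n)$ bound on these four intervals; your version replaces that citation with a self-contained computation, further weakening the bound to $F_1\leq\frac{1}{\pi}\sqrt{C/A}$ by Cauchy--Schwarz and estimating $A$ from below and $C$ from above via the spectral bounds $\frac{c_2}{2\pi}<f<\frac{c_1}{2\pi}$. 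Your estimates check out: on $(0,1-\elg)$ the identity $\int_{-\pi}^{\pi}|1-xe^{i\phi}|^{-4}d\phi=2\pi(1+x^2)/(1-x^2)^3$ gives $\sqrt{C/A}=O(1/(1-x^2))$ and hence an $O(\log\log n)$ integral, and on the edge interval the split at $x=1-1/n$ correctly separates the geometric regime ($\sum x^{2k}\asymp 1/(1-x)$, contributing $\log\log\log n$) from the saturated regime ($\sum x^{2k}\asymp n$, $\sum k^2x^{2k-2}\lesssim n^3$, contributing $O(1)$); one should just note that on $(1-\lgn,1-1/n)$ the relevant fact is $n(1-x)\geq 1$, which already forces $x^{2n+2}\leq e^{-2}$, rather than $n(1-x)\to\infty$ as you wrote. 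What the paper's route buys is brevity and a sharper intermediate bound (it keeps the cancellation in $AC-B^2$); what yours buys is independence from the companion paper and an explicit, checkable argument, at the cost of a cruder constant that is harmless since only an $O(\log\log n)$ bound is needed.
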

\begin{proof}
To start, we note that since the quantity $\frac{K^2C}{AC-B^2}$ is never negative, the inequality
\[
\exp\left(-\frac{K^2C}{2\left(AC - B^2\right)}\right) \leq 1
\]
holds in general.  It follows that
\begin{equation} \label{F1bound}
\int_{\alpha}^{\beta} F_1 dx \leq \frac{1}{\pi}\int_{\alpha}^{\beta}\frac{\sqrt{AC-B^2}}{A}dx.
\end{equation}
Applying Lemma \ref{F2bound} from above, along with Lemma 2.1 from \cite{Matayoshi}, we then have
\[
\Exp\left[N\left(-1+\elg,1-\elg\right)\right] = O\left(\log{\log{n}}\right),
\]
and
\[
\Exp\left[N\left(-1,-1+\lgn\right)\right] = \Exp\left[N\left(1-\lgn,1\right)\right] = O\left(\log{\log{n}}\right).
\]
\end{proof}
The last lemma of this section will be concerned with computing the integral of $F_1$ on the intervals $(-1+\lgn,-1+\elg)$ and $(1-\elg, 1 - \lgn)$.
\begin{lemma} \label{-1to1}
The integral of $F_1$ on the intervals $(-1+\frac{\log{\log{n}}}{n}, -1+\elg)$ and $(1-\elg, 1-\frac{\log{\log{n}}}{n})$ is given by the following:
\renewcommand{\theenumi}{\roman{enumi}}
\renewcommand{\labelenumi}{(\theenumi)}
\begin{enumerate}
\item For $K$ bounded and $f\in C\left([-\pi,\pi]\right)$,
\begin{equation*}
\frac{1}{\pi}\int_{-1+\lgn}^{-1+\elg}F_1 = \frac{1}{\pi}\int_{1-\elg}^{1-\lgn}F_1 \sim \frac{1}{2\pi}\log{n}.
\end{equation*}
\item For $K=o\left(\sqrt{\frac{n}{\log{\log{n}}}}\right)$ and $f\in C^1\left([-\pi,\pi]\right)$,
\begin{equation*}
\frac{1}{\pi}\int_{-1+\lgn}^{-1+\elg}F_1 = \frac{1}{\pi}\int_{1-\elg}^{1-\lgn}F_1 = \frac{1}{2\pi}\log{\left(\frac{n}{K^2}\right)} + O\left(\log\log{{n}}\right).
\end{equation*}
\end{enumerate}
\end{lemma}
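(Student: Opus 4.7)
The plan is to work on the right subinterval $I_+ = (1-\elg, 1-\lgn)$; the symmetric analysis on $I_- = (-1+\lgn, -1+\elg)$ yields an identical estimate with $f(0)$ replaced by $f(\pi)$, but the final bound depends on neither. On $I_+$, the spectral integrals defining $A$, $B$, $C$ concentrate near $\phi = 0$ as $x \to 1^-$, and by the arguments already developed in \cite{Matayoshi} (using only continuity and positivity of $f$) one obtains uniformly on $I_+$
\[
A(x) \sim \frac{2\pi f(0)}{1-x^2}, \quad B(x) \sim \frac{2\pi f(0)\, x}{(1-x^2)^2}, \quad C(x) \sim \frac{2\pi f(0)(1+x^2)}{(1-x^2)^3}.
\]
Combining these gives $\frac{\sqrt{AC - B^2}}{A} \sim \frac{1}{1-x^2}$ and $\frac{C}{AC-B^2} \sim \frac{1-x^4}{2\pi f(0)}$.

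For part (i), with $K$ bounded, the exponent $\frac{K^2 C}{2(AC - B^2)} = O\bigl(K^2(1-x^2)\bigr) = O(1/\log n)$ uniformly on $I_+$, so the exponential factor in $F_1$ tends to $1$. Hence
\[
\int_{I_+}F_1\,dx \sim \frac{1}{\pi}\int_{1-\elg}^{1-\lgn}\frac{dx}{1-x^2} = \frac{1}{2\pi}\log\frac{1+x}{1-x}\bigg|_{1-\elg}^{1-\lgn} \sim \frac{1}{2\pi}\log n,
\]
the last step since the endpoint contributions are $\frac{1}{2\pi}\log(2n/\log\log n) \sim \frac{1}{2\pi}\log n$ and $-\frac{1}{2\pi}\log(2\log n) = O(\log\log n)$, respectively.

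For part (ii), the hypothesis $f \in C^1$ sharpens the above so that the relative errors translate into an \emph{additive} $o(1)$ error in the exponent $K^2 C/(2(AC-B^2))$ uniformly on $I_+$. Setting $u = K^2(1-x^2)/(2\pi f(0))$, we have $dx/(1-x^2) = -du/(2u)$, and the integral transforms into
\[
\int_{I_+}F_1\,dx = \frac{1}{\pi}\int_{u_1}^{u_0}\frac{e^{-u}}{2u}\,du + O(\log\log n),
\]
where $u_0 \sim K^2/(\pi f(0)\log n)$ and $u_1 \sim K^2\log\log n/(\pi f(0)\, n)$. The hypothesis $K = o\!\bigl(\sqrt{n/\log\log n}\bigr)$ forces $u_1 \to 0$. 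Splitting the inner integral at $u=1$ and using $e^{-u} = 1 + O(u)$ for small $u$ together with $\int_1^\infty e^{-u}/u\,du = O(1)$, one finds in every regime
\[
\int_{u_1}^{u_0}\frac{e^{-u}}{2u}\,du = -\tfrac{1}{2}\log u_1 + O(1) = \tfrac{1}{2}\log(n/K^2) + O(\log\log n),
\]
giving the desired $\int_{I_+} F_1\,dx = \frac{1}{2\pi}\log(n/K^2) + O(\log\log n)$.

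The most delicate point is ensuring that the error arising from replacing $K^2 C/(2(AC-B^2))$ by its leading asymptotic $u$ does not blow up under exponentiation. A multiplicative factor $(1+o(1))$ in the exponent is harmless when the exponent is $O(1)$, but potentially catastrophic when $K^2/\log n$ is large. The $C^1$ hypothesis on $f$ produces an additive $o(1)$ remainder in the spectral expansions of $A$, $B$, $C$ near $\phi = 0$, and this is precisely what permits the substitution and the absorption of all secondary errors into $O(\log\log n)$.
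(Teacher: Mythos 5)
Your overall route is essentially the paper's: the same asymptotics for $A$, $B$, $C$ near $x=\pm 1$ (imported from the companion paper, with $f(0)$ on the right and $f(\pi)$ on the left), the same observation that for bounded $K$ the exponential factor tends to $1$, and for growing $K$ the same reduction to $\int e^{-u}/(2u)\,du$ split at $u=1$, yielding $-\tfrac12\log u_1 + O(1) = \tfrac12\log(n/K^2)+O(\log\log n)$. Your constants check out against the paper's ($\arctan(g(y)/y)\to\pi/2$ reconciles the two normalizations), and your part (i) is fine.

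The gap is in the error control for part (ii), precisely at the point you flag as delicate. You claim that the $C^1$ hypothesis produces an \emph{additive} $o(1)$ remainder in the exponent $K^2C/(2(AC-B^2))$. It does not. The $C^1$ refinement gives $C = \frac{f(0)}{y^3}\arctan\left(\frac{g(y)}{y}\right) + O\left(\frac{1}{y^2 g(y)}\right)$ and $AC-B^2 = \frac{f^2(0)}{y^4}\arctan^2\left(\frac{g(y)}{y}\right) + O\left(\frac{1}{y^3 g(y)}\right)$ with $g(y)=y\log n/\log\log n$, i.e.\ a \emph{relative} error $O(\log\log n/\log n)$; the resulting additive error in the exponent is $O\left(K^2 y \cdot \frac{\log\log n}{\log n}\right)$, which at $y = \elg$ is $O\left(K^2\log\log n/\log^2 n\right)$ and is unbounded for $K$ anywhere near the permitted range $K=o\left(\sqrt{n/\log\log n}\right)$ (e.g.\ $K\sim n^{1/3}$). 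So the step "the relative errors translate into an additive $o(1)$ error in the exponent" is false as stated, and your own preceding sentence correctly identifies why a multiplicative $(1+o(1))$ in a large exponent is not automatically harmless. The paper closes this by a sandwich: choose constants $a_1<1<a_2$ with $a_1 u \le K^2C/(2(AC-B^2)) \le a_2 u$, evaluate $\int \frac{1}{2y}e^{-a_i c' K^2 y}\,dy$ for each $i$ via the exponential integral, and observe that the constant enters the final answer only through $\log(a_i c')=O(1)$, so both bounds give $\frac{1}{2\pi}\log(n/K^2)+O(\log\log n)$. Your computation of the $u$-integral survives this fix unchanged, but as written the justification for replacing the true exponent by $u$ is incorrect and needs to be replaced by (or reduced to) the sandwich argument.
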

\begin{proof}
We will follow a similar procedure to that used by Farahmand in \cite{Farahmand86} and \cite{Farahmand862}.  That is, an asymptotic value for the integral of $F_1$ will be obtained by deriving upper and lower bounds for the integral, whereupon the true asymptotic value will then lie between these.  Let $g(y)=y\frac{\log{n}}{\log{\log{n}}}$.  Starting with $x=1-y\in(1-\elg,1-\lgn)$, from (3.2), (3.5), and (3.9) in \cite{Matayoshi} we have the equations
\begin{equation} \label{ABC+cont}
\begin{split}
A &\sim \frac{2f(0)}{y}\arctan{\left(\frac{g(y)}{y}\right)},\\
B &\sim \frac{f(0)}{y^2}\arctan{\left(\frac{g(y)}{y}\right)},\\
C &\sim \frac{f(0)}{y^3}\arctan{\left(\frac{g(y)}{y}\right)},
\end{split}
\end{equation}
for $f(\phi)\in \cont$, and
\begin{equation} \label{ABC+}
\begin{split}
A &= \frac{2f(0)}{y}\arctan{\left(\frac{g(y)}{y}\right)} + O\left(\frac{1}{g(y)}\right),\\
B &= \frac{f(0)}{y^2}\arctan{\left(\frac{g(y)}{y}\right)} + O\left(\frac{1}{yg(y)}\right),\\
C &= \frac{f(0)}{y^3}\arctan{\left(\frac{g(y)}{y}\right)} + O\left(\frac{1}{y^2g(y)}\right),
\end{split}
\end{equation}
for $f(\phi)\in \diff$.  We then have the expressions
\begin{equation} \label{ac-b2cont}
\begin{split}
AC - B^2 &\sim \frac{f^2(0)}{y^4}\left[\arctan{\left(\frac{g(y)}{y}\right)}\right]^2,\\
\frac{\sqrt{AC - B^2}}{A} &\sim \frac{1}{2y},
\end{split}
\end{equation}
for $f(\phi)\in \cont$, and
\begin{equation} \label{ac-b2diff}
\begin{split}
AC - B^2 &= \frac{f^2(0)}{y^4}\left[\arctan{\left(\frac{g(y)}{y}\right)}\right]^2 + O\left(\frac{1}{y^3g(y)}\right),\\
\frac{\sqrt{AC - B^2}}{A} &= \frac{1}{2y} + O\left(\frac{1}{g(y)}\right),
\end{split}
\end{equation}
for $f(\phi)\in \diff$.  

We will first handle the simpler case when $f(\phi) \in \cont$ and $K$ is bounded.  Applying \eqref{ABC+cont} and \eqref{ac-b2cont} to \eqref{fullKac} gives
\begin{equation}	\label{F1cont}
\begin{split}
\frac{1}{\pi}\int_{1-\elg}^{1-\lgn}F_1 &= \frac{1}{\pi}\int_{\lgn}^{\elg}\frac{1}{2y}\exp{\left(\frac{-K^2y}{2f(0)\arctan{\left(\frac{g(y)}{y}\right)}}\right)}dy\\
&\sim \frac{1}{\pi}\int_{\lgn}^{\elg}\frac{1}{2y}\left(1 - \frac{K^2y}{2f(0)\arctan{\left(\frac{g(y)}{y}\right)}}\right)dy\\
&\sim \frac{1}{2\pi}\log{n}.
\end{split}
\end{equation}

Next, let $f(\phi) \in \diff$ and $K=o\left(\sqrt{\frac{n}{\log{\log{n}}}}\right)$.  Applying \eqref{ABC+} and \eqref{ac-b2diff} yields
\begin{flalign*}
\frac{CK^2}{2(AC - B^2)} &= \frac{K^2}{2}\left[\frac{f(0)}{y^3}\arctan{\left(\frac{g(y)}{y}\right)} + O\left(\frac{1}{y^2g(y)}\right)\right]\\
	&  \quad \cdot \left[\frac{f^2(0)}{y^4}\left[\arctan{\left(\frac{g(y)}{y}\right)}\right]^2 + O\left(\frac{1}{y^3g(y)}\right)\right]^{-1}\\
	&= \frac{K^2y}{2f(0)\arctan{\left(\frac{g(y)}{y}\right)}} + O\left(\frac{K^2y^2}{g(y)}\right).
\end{flalign*}
Now, we can choose positive constants $a_1$ and $a_2$ such that for large enough n,
\begin{flalign*}
\frac{a_1K^2y}{2f(0)\arctan{\left(\frac{g(y)}{y}\right)}} &\leq \frac{K^2y}{2f(0)\arctan{\left(\frac{g(y)}{y}\right)}} + O\left(\frac{K^2y^2}{g(y)}\right)\\
	& \leq \frac{a_2K^2y}{2f(0)\arctan{\left(\frac{g(y)}{y}\right)}},
\end{flalign*}
which then yields
\begin{multline} \label{F1bounds}
\left[\frac{1}{2y} + O\left( \frac{1}{g(y)}\right)\right]\exp{\left(\frac{-a_2K^2y}{2f(0)\arctan{\left(\frac{g(y)}{y}\right)}}\right)}\\
\leq F_1 \leq \left[\frac{1}{2y} + O\left( \frac{1}{g(y)}\right)\right]\exp{\left(\frac{-a_1K^2y}{2f(0)\arctan{\left(\frac{g(y)}{y}\right)}}\right)}.\\
\end{multline}
For $i=1,2$ we have
\begin{multline} \label{eqn}
\left[\frac{1}{2y} + O\left( \frac{1}{g(y)}\right)\right]\exp{\left(\frac{-a_iK^2y}{2f(0)\arctan{\left(\frac{g(y)}{y}\right)}}\right)} \\ =\frac{1}{2y}\exp{\left(\frac{-a_iK^2y}{2f(0)\arctan{\left(\frac{g(y)}{y}\right)}}\right)} + O\left( \frac{1}{g(y)}\right).
\end{multline}
Thus, using an argument similar to the one on page 706 in \cite{Farahmand86},
\begin{equation} \label{int}
\begin{split}
&\frac{1}{\pi}\int_{\lgn}^{\elg}\Bigg[\frac{1}{2y}\exp{\left(\frac{-a_iK^2y}{2f(0)\arctan{\left(\frac{g(y)}{y}\right)}}\right)} + O\left(\frac{1}{g(y)}\right)\Bigg]dy\\
 			&= \frac{1}{\pi}\int_{\lgn}^{\elg}\frac{1}{2y}\exp{\left(-cK^2y\right)}dy + O\left(\log{\log{n}}\right)\\
 			&  \quad \left(\mbox{where $c=a_i\left[2f(0)\arctan{\left(\frac{g(y)}{y}\right)}\right]^{-1}$}\right)\\
 			&= \frac{1}{2\pi}\left[\log{\left(cK^2\elg\right)} - \log{\left(cK^2\lgn\right)}\right]\\
 			&  \quad + \frac{1}{2\pi}\int_0^{cK^2\lgn}\frac{1-e^{-t}}{t}dt - \frac{1}{2\pi}\int_0^{cK^2\elg}\frac{1-e^{-t}}{t}dt\\
 			&= \frac{1}{2\pi}\log{n} + \frac{1}{2\pi}\int_0^{cK^2\lgn}\frac{1-e^{-t}}{t}dt
 			 - \frac{1}{2\pi}\int_0^{cK^2\elg}\frac{1-e^{-t}}{t}dt + O\left(\log{\log{n}}\right).
\end{split}
\end{equation}
Since we are assuming that $K^2\lgn\rightarrow 0$ as $n\rightarrow \infty$, the first integral is $o(1)$.  For the second we have, by again using an argument drawn from page 706 in \cite{Farahmand86},
\begin{equation} \label{int2}
\begin{split}
		&= - \frac{1}{2\pi}\int_1^{cK^2\elg} \frac{1-e^{-t}}{t}dt - \frac{1}{2\pi}\int_0^{1}\frac{1-e^{-t}}{t}dt \\
		&= -\frac{1}{2\pi}\int_1^{cK^2\elg}\frac{1}{t}dt + \frac{1}{2\pi}\int_1^{cK^2\elg}\frac{e^{-t}}{t}dt + O(1)\\
		&= -\frac{1}{2\pi}\log{K^2} + O\left(\log\log{{n}}\right).
\end{split}		
\end{equation}
By \eqref{F1bounds}, \eqref{eqn}, \eqref{int}, and \eqref{int2} it then follows that
\begin{equation} \label{F11stinterval}
\frac{1}{\pi}\int_{1-\elg}^{1-\lgn}F_1 = \frac{1}{2\pi}\log{\left(\frac{n}{K^2}\right)} + O\left(\log\log{{n}}\right).
\end{equation}		
To handle the interval from $(-1+\lgn, -1+\elg)$ we will substitute in $-x=-1+y$, where $x\in (1-\elg,1-\lgn)$.  Then
\begin{flalign*}
A &=       \int_{-\pi}^{\pi}\frac{1-(-x)^{n+1}e^{-i(n+1)\phi}}{1+xe^{-i\phi}} \cdot \frac{1-(-x)^{n+1}e^{i(n+1)\phi}}{1+xe^{i\phi}}f(\phi)d\phi,\\
B &= \int_{-\pi}^{\pi}\left(\frac{1-(-x)^{n+1}e^{-i(n+1)\phi}}{1+xe^{-i\phi}}\right)\\
  &  \quad \cdot \left(\frac{-(n+1)(-x)^ne^{i(n+1)\phi}(1+xe^{i\phi})-(1-(-x)^{n+1}e^{i(n+1)\phi})(-e^{i\phi})}{(1+xe^{i\phi})^2}\right)f(\phi)d\phi,
\end{flalign*}
and
\begin{flalign*}
C &= \int_{-\pi}^{\pi}\left(\frac{-(n+1)(-x)^ne^{-i(n+1)\phi}(1+xe^{-i\phi})-(1-(-x)^{n+1}e^{-i(n+1)\phi})(-e^{-i\phi})}{(1+xe^{-i\phi})^2}\right)\\
  &  \quad \cdot \left(\frac{-(n+1)(-x)^ne^{i(n+1)\phi}(1+xe^{i\phi})-(1-(-x)^{n+1}e^{i(n+1)\phi})(-e^{i\phi})}{(1+xe^{i\phi})^2}\right)f(\phi)d\phi.\\
\end{flalign*}  
From (3.15), (3.16), and (3.19) in \cite{Matayoshi}, we have
\begin{equation} \label{ABC-cont}
\begin{split}
A &\sim \frac{2f(\pi)}{y}\arctan{\left(\frac{g(y)}{y}\right)},\\
B &\sim -\frac{f(\pi)}{y^2}\arctan{\left(\frac{g(y)}{y}\right)},\\
C &\sim \frac{f(\pi)}{y^3}\arctan{\left(\frac{g(y)}{y}\right)},
\end{split}
\end{equation}
for $f(\phi)\in \cont$, and
\begin{equation} \label{ABC-}
\begin{split}
A &= \frac{2f(\pi)}{y}\arctan{\left(\frac{g(y)}{y}\right)} + O\left(\frac{1}{g(y)}\right),\\
B &= -\frac{f(\pi)}{y^2}\arctan{\left(\frac{g(y)}{y}\right)} + O\left(\frac{1}{yg(y)}\right),\\
C &= \frac{f(\pi)}{y^3}\arctan{\left(\frac{g(y)}{y}\right)} + O\left(\frac{1}{y^2g(y)}\right),
\end{split}
\end{equation}
for $f(\phi) \in \diff$.  We now have the expressions
\begin{equation} \label{-ac-b2cont}
\begin{split}
AC - B^2 &\sim \frac{f^2(\pi)}{y^4}\left[\arctan{\left(\frac{g(y)}{y}\right)}\right]^2,\\
\frac{\sqrt{AC - B^2}}{A} &\sim \frac{1}{2y},
\end{split}
\end{equation}
for $f(\phi)\in \cont$, and
\begin{equation} \label{-ac-b2diff}
\begin{split}
AC - B^2 &= \frac{f^2(\pi)}{y^4}\left[\arctan{\left(\frac{g(y)}{y}\right)}\right]^2 + O\left(\frac{1}{y^3g(y)}\right),\\
\frac{\sqrt{AC - B^2}}{A} &= \frac{1}{2y} + O\left(\frac{1}{g(y)}\right),
\end{split}
\end{equation}
for $f(\phi) \in \diff$.

We will again start with the simpler case when $f(\phi)\in\cont$ and $K$ is bounded.  By \eqref{ABC-cont} and \eqref{-ac-b2cont},
\[
\frac{CK^2}{2(AC - B^2)} \sim \frac{K^2y}{2f(\pi)\arctan{\left(\frac{g(y)}{y}\right)}},
\]
from which it then follows that
\begin{equation} \label{F1contneg}
\begin{split}
& \frac{1}{\pi}\int_{\lgn}^{\elg}\frac{1}{2y}\exp{\left(\frac{-K^2y}{2f(\pi)\arctan{\left(\frac{g(y)}{y}\right)}}\right)}dy\\
&\sim \frac{1}{\pi}\int_{\lgn}^{\elg}\frac{1}{2y}\left(1 - \frac{K^2y}{2f(\pi)\arctan{\left(\frac{g(y)}{y}\right)}}\right)dy\\
&\sim \frac{1}{2\pi}\log{n}.
\end{split}
\end{equation}

Next, we will assume $f(\phi)\in\diff$ and $K=o\left(\sqrt{\frac{n}{\log{\log{n}}}}\right)$.  Using \eqref{ABC-} and \eqref{-ac-b2diff} gives us
\begin{flalign*}
\frac{CK^2}{2(AC - B^2)} &= \frac{K^2}{2}\left[\frac{f(\pi)}{y^3}\arctan{\left(\frac{g(y)}{y}\right)} + O\left(\frac{1}{y^2g(y)}\right)\right]\\
	&  \quad \cdot \left[\frac{f^2(\pi)}{y^4}\left[\arctan{\left(\frac{g(y)}{y}\right)}\right]^2 + O\left(\frac{1}{y^3g(y)}\right)\right]^{-1}\\
	&= \frac{K^2y}{2f(\pi)\arctan{\left(\frac{g(y)}{y}\right)}} + O\left(\frac{K^2y^2}{g(y)}\right).
\end{flalign*}
As before, we can choose positive constants $a_1$ and $a_2$ such that
\begin{flalign*}
\frac{a_1K^2y}{2f(\pi)\arctan{\left(\frac{g(y)}{y}\right)}} &\leq \frac{K^2y}{2f(\pi)\arctan{\left(\frac{g(y)}{y}\right)}} + O\left(\frac{K^2y^2}{g(y)}\right)\\
	& \leq \frac{a_2K^2y}{2f(\pi)\arctan{\left(\frac{g(y)}{y}\right)}},
\end{flalign*}
which then yields
\begin{multline} \label{F1bounds-}
\left[\frac{1}{2y} + O\left( \frac{1}{g(y)}\right)\right]\exp{\left(\frac{-a_2K^2y}{2f(\pi)\arctan{\left(\frac{g(y)}{y}\right)}}\right)}\\
\leq F_1 \leq \left[\frac{1}{2y} + O\left( \frac{1}{g(y)}\right)\right]\exp{\left(\frac{-a_1K^2y}{2f(\pi)\arctan{\left(\frac{g(y)}{y}\right)}}\right)}.\\
\end{multline}
Now, for $i=1,2$ we have
\begin{multline} \label{eqn2}
\left[\frac{1}{2y} + O\left( \frac{1}{g(y)}\right)\right]\exp{\left(\frac{-a_iK^2y}{2f(\pi)\arctan{\left(\frac{g(y)}{y}\right)}}\right)} \\ =\frac{1}{2y}\exp{\left(\frac{-a_iK^2y}{2f(\pi)\arctan{\left(\frac{g(y)}{y}\right)}}\right)} + O\left( \frac{1}{g(y)}\right).
\end{multline}
Thus,
\begin{equation} \label{int-}
\begin{split}
& \frac{1}{\pi}\int_{\lgn}^{\elg}\Bigg[\frac{1}{2y}\exp{\left(\frac{-a_iK^2y}{2f(\pi)\arctan{\left(\frac{g(y)}{y}\right)}}\right)} + O\left(\frac{1}{g(y)}\right)\Bigg]dy\\
 			&= \frac{1}{\pi}\int_{\lgn}^{\elg}\frac{1}{2y}\exp{\left(-cK^2y\right)}dy + O\left(\log{\log{n}}\right)\\
 			&  \quad \left(\mbox{where $c=a_i\left[2f(\pi)\arctan{\left(\frac{g(y)}{y}\right)}\right]^{-1}$}\right)\\
 			&= \frac{1}{2\pi}\log{\left(\frac{n}{K^2}\right)} + O\left(\log\log{{n}}\right),
\end{split}
\end{equation}
where the last line comes from \eqref{int} and \eqref{int2}. 			
It follows from \eqref{F1bounds-}, \eqref{eqn2}, and \eqref{int-} that
\begin{equation} \label{F12ndinterval}
\frac{1}{\pi}\int_{-1+\lgn}^{-1+\elg}F_1 = \frac{1}{2\pi}\log{\left(\frac{n}{K^2}\right)} + O\left(\log\log{{n}}\right).
\end{equation}
Combined with \eqref{F1cont}, \eqref{F11stinterval}, and \eqref{F1contneg}, this completes the proof.
\end{proof}

\section{Expected Number of Level Crossings on $(-\infty,-1)$ and $(1,\infty)$}

Now that we have derived the expected number of zeros for $(-1,1)$, this last section will consider the remaining intervals $(-\infty,-1)$ and $(1,\infty)$.  We will start with the latter.  As done by Farahmand in \cite{Farahmand86} and \cite{Farahmand862}, let $x=\frac{1}{z}$.  Then, for $z\in(0,1)$ we have
\begin{equation} \label{Ainf}
\begin{split}
A\left(\frac{1}{z}\right) &= \sum_{k=0}^{n}{\sum_{j=0}^{n}{\Gamma(k-j)z^{-(k+j)}}}\\
	&= \int_{-\pi}^{\pi} \frac{1-z^{-(n+1)}e^{-i(n+1)\phi}}{1-z^{-1}e^{-i\phi}} \cdot \frac{1-z^{-(n+1)}e^{i(n+1)\phi}}{1-z^{-1}e^{i\phi}} f(\phi)d\phi\\
	&= z^{-2n}\int_{-\pi}^{\pi} \frac{1-z^{n+1}e^{i(n+1)\phi}}{1-ze^{i\phi}} \cdot \frac{1-z^{n+1}e^{-i(n+1)\phi}}{1-ze^{-i\phi}} f(\phi)d\phi,\\
\end{split}
\end{equation}
\begin{equation} \label{Binf}
\begin{split}
B\left(\frac{1}{z}\right) &= \sum_{k=0}^{n}{\sum_{j=0}^{n}{\Gamma(k-j)kz^{-(k+j-1)}}}\\
	&= \int_{-\pi}^{\pi} \frac{1-z^{-(n+1)}e^{-i(n+1)\phi}}{1-z^{-1}e^{-i\phi}}\\
	&  \quad \cdot \frac{-(n+1)z^{-n}e^{i(n+1)\phi}\left(1-z^{-1}e^{i\phi}\right) + \left(1-z^{-(n+1)}e^{i(n+1)\phi}\right)e^{i\phi}} {\left(1-z^{-1}e^{i\phi}\right)^2} f(\phi)d\phi\\
	&= -z^{-2n+1}\int_{-\pi}^{\pi} \frac{1-z^{n+1}e^{i(n+1)\phi}}{1-ze^{i\phi}}\\
	&  \quad \cdot \frac{-(n+1)\left(1-ze^{-i\phi}\right) + 1-z^{n+1}e^{-i(n+1)\phi}} {\left(1-ze^{-i\phi}\right)^2}f(\phi)d\phi,\\
\end{split}
\end{equation}
and
\begin{equation} \label{Cinf}
\begin{split}
C\left(\frac{1}{z}\right) &= \sum_{k=0}^{n}{\sum_{j=0}^{n}{\Gamma(k-j)kjz^{-(k+j-2)}}}\\
	&= \int_{-\pi}^{\pi}\frac{-(n+1)z^{-n}e^{-i(n+1)\phi}\left(1-z^{-1}e^{-i\phi}\right) + \left(1-z^{-(n+1)}e^{-i(n+1)\phi}\right)e^{-i\phi}} {\left(1-z^{-1}e^{-i\phi}\right)^2}\\
	\end{split}
\end{equation}
\begin{equation*}
\begin{split}
	&  \quad \cdot \frac{-(n+1)z^{-n}e^{i(n+1)\phi}\left(1-z^{-1}e^{i\phi}\right) + \left(1-z^{-(n+1)}e^{i(n+1)\phi}\right)e^{i\phi}} {\left(1-z^{-1}e^{i\phi}\right)^2}f(\phi)d\phi\\
	&= z^{-2n+2}\int_{-\pi}^{\pi}\frac{-(n+1)\left(1-ze^{i\phi}\right)+1-z^{n+1}e^{i(n+1)\phi}}{\left(1-ze^{i\phi}\right)^2}\\
	&  \quad \cdot \frac{-(n+1)\left(1-ze^{-i\phi}\right) +1-z^{n+1}e^{-i(n+1)\phi}}{\left(1-ze^{-i\phi}\right)^2}f(\phi)d\phi.\\
\end{split}	
\end{equation*}

As before, the first step is to get a bound for the integral of $F_2$.
\begin{lemma}\label{F2infbound}
\[
\int_1^{\infty}F_2dx = \int_{-\infty}^{-1}F_2dx = o(1).
\]
\end{lemma}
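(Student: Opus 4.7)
The plan is to parallel the strategy of Lemma \ref{F2bound} after the substitution $x = 1/z$, $z \in (0,1)$, that is already set up in \eqref{Ainf}--\eqref{Cinf}. From $\mbox{erf} \leq 1$, $\exp(-K^2/(2A)) \leq 1$, and the Cauchy-Schwarz inequality $|B|/A^{3/2} \leq \sqrt{C}/A$ (which follows from $AC - B^2 \geq 0$), the task reduces to bounding
\[
\int_0^1 \frac{|K|\sqrt{C(1/z)}}{A(1/z)}\,\frac{dz}{z^2}.
\]
The formulas in \eqref{Ainf} and \eqref{Cinf} factor as $A(1/z) = z^{-2n}\tilde A(z)$ and $C(1/z) = z^{-2n+2}\tilde C(z)$, so the integrand takes the form $|K|\,z^{n-1}\sqrt{\tilde C}/\tilde A$, with the essential decay provided by the $z^{n-1}$ factor.

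The next step is a lower bound on $\tilde A$ and an upper bound on $\tilde C$. Combining $f \geq c_2/(2\pi)$, the Poisson identity $\int_{-\pi}^{\pi} d\phi/|1-ze^{i\phi}|^2 = 2\pi/(1-z^2)$, and $|1-z^{n+1}e^{i(n+1)\phi}|^2 \geq (1-z^{n+1})^2$ gives
\[
\tilde A(z) \;\geq\; c_2\,\frac{(1-z^{n+1})^2}{1-z^2},
\]
which has order $\min\{n, 1/(1-z)\}$. For $\tilde C$, expanding the squared numerator in \eqref{Cinf} via $(a+b)^2 \leq 2a^2 + 2b^2$, using $f \leq c_1/(2\pi)$, and the standard identity $\int_{-\pi}^{\pi} d\phi/|1-ze^{i\phi}|^4 = 2\pi(1+z^2)/(1-z^2)^3$ yield $\tilde C(z) = O(n^2/(1-z) + 1/(1-z)^3)$. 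Treating the regimes $1-z \geq 1/n$ and $1-z \leq 1/n$ separately produces the uniform estimate $\sqrt{\tilde C}/\tilde A = O(\sqrt n + n\sqrt{1-z})$.

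Beta-function asymptotics then give
\[
\int_0^1 z^{n-1}(\sqrt n + n\sqrt{1-z})\,dz \;=\; \frac{1}{\sqrt n} + n\,B(n,\tfrac{3}{2}) \;=\; O(1/\sqrt n),
\]
so $\int_1^{\infty} F_2\,dx = O(|K|/\sqrt n)$, which is $o(1)$ under the hypothesis $K = o(\sqrt{n/\log\log n})$ and also when $K$ is bounded. The integral over $(-\infty,-1)$ is handled by the symmetric substitution $-x = 1/z$, $z \in (0,1)$; the analogous formulas, with $f(\pi)$ replacing $f(0)$ in the relevant places, yield the same bound.

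The main obstacle is producing the uniform bound $\sqrt{\tilde C}/\tilde A = O(\sqrt n + n\sqrt{1-z})$ sharp in both regimes. The Cauchy-Schwarz detour $|B|/A^{3/2} \leq \sqrt{C}/A$ is crucial here: a direct triangle-inequality bound on $|\tilde B|$ would generate a spurious $(1-z)^{-2}$ singularity that is not tame enough against $z^{n-1}/\tilde A^{3/2}$, whereas routing through $\sqrt{C}/A$ automatically respects the cancellation encoded in the nonnegativity of $AC - B^2$.
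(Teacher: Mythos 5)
Your reduction via Cauchy--Schwarz ($|B|\leq\sqrt{AC}$, hence $|B|/A^{3/2}\leq\sqrt{C}/A$) is a legitimate alternative to the paper's route, and the overall architecture (pull out $z^{n-1}$, prove a uniform bound on $\sqrt{\tilde C}/\tilde A$, finish with a Beta integral) would close the argument \emph{if} the uniform estimate $\sqrt{\tilde C}/\tilde A=O(\sqrt{n}+n\sqrt{1-z})$ held. But that estimate does not follow from the two bounds you actually establish, and the failure is in the critical regime $1-z\ll 1/n$. First, your lower bound $\tilde A\geq c_2(1-z^{n+1})^2/(1-z^2)$ is \emph{not} of order $\min\{n,1/(1-z)\}$: as $z\to1^-$ it behaves like $(n+1)^2(1-z)/2$ and tends to $0$, whereas the correct order there is $\asymp n$, which requires the sharper bound $\tilde A\geq c_2(1-z^{2n+2})/(1-z^2)=c_2\sum_{k=0}^{n}z^{2k}$, i.e.\ keeping the factor $1+z^{n+1}\geq1$ rather than discarding it (this is exactly the bound the paper uses). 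Second, your upper bound $\tilde C=O(n^2/(1-z)+1/(1-z)^3)$, obtained from $(a+b)^2\leq2a^2+2b^2$, throws away the cancellation in the numerator of $C$: writing $w=ze^{i\phi}$, one has $-(n+1)(1-w)+1-w^{n+1}=O(n^2|1-w|^2)$ for $n|1-w|\lesssim1$, and it is this cancellation that keeps $\tilde C=O(n^2\min\{n,1/(1-z)\})$; your bound instead blows up like $(1-z)^{-3}$, far exceeding the true order $n^3$ once $1-z\ll1/n$. Combining your two stated bounds in the regime $1-z\leq1/n$ gives $\sqrt{\tilde C}/\tilde A=O\bigl(n^{-2}(1-z)^{-5/2}\bigr)$, which is not $O(\sqrt n+n\sqrt{1-z})$ and is not even integrable against $z^{n-1}$ near $z=1$. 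So the sentence ``treating the regimes \dots produces the uniform estimate'' is precisely where the proof breaks.

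For comparison, the paper sidesteps both issues by bounding $|B(1/z)|\leq n|z|^{-2n+1}A(|z|)\leq c_1n|z|^{-2n+1}(1-z^{2n+2})/(1-z^2)$ directly (so the needed cancellation is inherited from $A$ rather than extracted from $C$), pairing this with the sharp lower bound $A(1/z)\geq c_2z^{-2n}(1-z^{2n+2})/(1-z^2)$, and then splitting the integral at $1-1/\sqrt n$ with an integration by parts on the inner piece. If you want to keep the Cauchy--Schwarz route, you must replace your two bounds by the sharp versions: with $\tilde A\gtrsim\min\{n,1/(1-z)\}$ and $\tilde C\lesssim n^2\min\{n,1/(1-z)\}$, the claimed uniform estimate and the rest of your computation do go through.
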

\begin{proof}
We have
\begin{equation} \label{F2intbound}
\begin{split}
\int_1^{\infty}F_2dx &\leq \frac{\sqrt{2}}{\pi}\int_1^{\infty}\frac{\left|B(x)K\right|}{A^{3/2}(x)}dx\\
			&= \frac{\sqrt{2}}{\pi}\int_0^1\frac{1}{z^2}\frac{\left|B\left(\frac{1}{z}\right)K\right|}{A^{3/2}\left(\frac{1}{z}\right)}dz.\\
\end{split}
\end{equation}
Let $c_1$ and $c_2$ be as in the proof of Lemma \ref{F2bound}.  Then, for $z\in(-1,0)\cup(0,1)$,
\begin{flalign*}
\left|B\left(\frac{1}{z}\right)\right| &\leq n|z|^{-2n+1}\sum_{k=0}^n\sum_{j=0}^n\Gamma(k - j)|z|^{2n-k-j}\\
		&=    n|z|^{-2n+1}A(|z|)\\
		&\leq  c_1n|z|^{-2n+1}\frac{1-z^{2n+2}}{1-z^2},
\end{flalign*}
where the last line is given by \eqref{upA}.  Also,
\begin{flalign*}
A\left(\frac{1}{z}\right) 
	&\geq z^{-2n}\frac{c_2}{2\pi}\int_{-\pi}^{\pi}\frac{1-z^{n+1}e^{i(n+1)\phi}}{\left(1-ze^{i\phi}\right)}\cdot\frac{1-z^{n+1}e^{-i(n+1)\phi}}{\left(1 - ze^{-i\phi}\right)}d\phi\\
	&= c_2z^{-2n}\frac{1-z^{2n+2}}{1-z^2}.
\end{flalign*}
Thus,
\[
\frac{\left|B\left(\frac{1}{z}\right)\right|}{A^{3/2}\left(\frac{1}{z}\right)} \leq cn|z|^{n+1}\sqrt{\frac{1-z^2}{1-z^{2n+2}}}.
\]
Consider the interval $(0,1-\frac{1}{\sqrt{n}})$.  Recalling that $K=o\left(\sqrt{\frac{n}{\log{\log{n}}}}\right)$, the above inequality yields
\begin{flalign*}
&\frac{\sqrt{2}}{\pi}\int_0^{1-\frac{1}{\sqrt{n}}}\frac{1}{z^2}\frac{\left|B\left(\frac{1}{z}\right)K\right|}{A^{3/2}\left(\frac{1}{z}\right)}dz\\
&\leq c|K|\int_0^{1-\frac{1}{\sqrt{n}}}nz^{n-1}\sqrt{\frac{1-z^2}{1-z^{2n+2}}}\\
&\leq c|K|\left(1 - \frac{1}{\sqrt{n}}\right)^n\\
&=    o(1).
\end{flalign*}
Next, for $z\in(1-\frac{1}{\sqrt{n}},1)$ we have
\begin{flalign*}
&\frac{\sqrt{2}}{\pi}\int_{1-\frac{1}{\sqrt{n}}}^1\frac{1}{z^2}\frac{\left|B\left(\frac{1}{z}\right)K\right|}{A^{3/2}\left(\frac{1}{z}\right)}dz\\
&\leq  c|K|\int_{1-\frac{1}{\sqrt{n}}}^1nz^{n-1}\sqrt{\frac{1-z^2}{1-z^{2n+2}}}\\
&=     \left.c|K|z^n\sqrt{\frac{1-z^2}{1-z^{2n+2}}}\right|_{1-\frac{1}{\sqrt{n}}}^1 - c|K|\int_{1-\frac{1}{\sqrt{n}}}^1z^n \frac{d}{dz}\left(\sqrt{\frac{1-z^2}{1-z^{2n+2}}}\right)dz\\
&= o(1),
\end{flalign*}			
where the last line follows from the fact that
\[
\frac{d}{dz}\left(\sqrt{\frac{1-z^2}{1-z^{2n+2}}}\right) = O\left(\sqrt{n}\right)
\]
on $z\in(1-\frac{1}{\sqrt{n}},1)$.  Applying \eqref{F2intbound}, this proves the result for $(1,\infty)$.  Noting that the same argument works for $-z$, the result then follows for $(-\infty,-1)$ as well.
\end{proof}

The next lemma will evaluate the integral of $F_1$.
\begin{lemma} \label{infcross}
\renewcommand{\theenumi}{\roman{enumi}}
\renewcommand{\labelenumi}{(\theenumi)}
\begin{enumerate}
\item For $f\in\cont$,
\[
\int_1^{\infty}F_1dx = \int_{-\infty}^{-1}F_1dx \sim \frac{1}{2\pi}\log{n}.
\]
\item For $f\in\diff$,
\[
\int_1^{\infty}F_1dx = \int_{-\infty}^{-1}F_1dx = \frac{1}{2\pi}\log{n} + O\left(\log{\log{n}}\right).
\]
\end{enumerate}
\end{lemma}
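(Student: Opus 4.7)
The plan is to parallel the proof of Lemma \ref{-1to1}, transporting $(1,\infty)$ to $(0,1)$ via the substitution $x = 1/z$ and extracting the main term from a neighbourhood of $z = 1$. The decisive new feature is a factor $z^{2n}$ that appears in the exponential of $F_1$ after the change of variables; this factor is so small across the central integration range that it kills the $K$-dependence in the exponent, which is why both (i) and (ii) share the leading term $\frac{1}{2\pi}\log{n}$ with no surviving $-\log{K^2}$ correction.

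First I would rewrite $F_1(1/z)\,z^{-2}\,dz$ explicitly. Pulling the powers $z^{-2n}$, $-z^{-2n+1}$, $z^{-2n+2}$ out of $A(1/z), B(1/z), C(1/z)$ via \eqref{Ainf}--\eqref{Cinf} leaves residual integrals $\tilde A(z), \tilde B(z), \tilde C(z)$ whose leading behaviour as $z \to 1^-$ can be derived by the same spectral-density techniques that produced \eqref{ABC+cont}--\eqref{ABC+} (with $f(0)$ playing its former role); in particular $\sqrt{\tilde A \tilde C - \tilde B^2}/\tilde A \sim 1/(2y)$ for $y = 1-z$ small. The change of variables then gives
\[
\int_1^{\infty} F_1\, dx = \frac{1}{\pi}\int_0^1 \frac{1}{z}\cdot\frac{\sqrt{\tilde A\tilde C - \tilde B^2}}{\tilde A}\exp\!\left(-\frac{K^2 z^{2n}\tilde C}{2(\tilde A\tilde C - \tilde B^2)}\right) dz.
\]

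I would then split the $z$-interval $(0,1)$ into $(0, 1-\elg]$, $[1-\elg, 1-\lgn]$, and $[1-\lgn, 1)$, mirroring Lemma \ref{-1to1}. On the central range, $z^{2n} = (1-y)^{2n}$ attains its maximum at the left endpoint $y = \lgn$, where it equals $(1+o(1))/(\log{n})^2$, while $y\leq\elg$; together with $K^2 = o(n/\log{\log{n}})$ the exponent satisfies
\[
\frac{K^2 z^{2n} y}{2f(0)\arctan{\left(g(y)/y\right)}} = o(1)
\]
uniformly in $y \in (\lgn, \elg)$, so the exponential is $1 + o(1)$. The central-range contribution therefore collapses to
\[
\frac{1}{\pi}\int_{\lgn}^{\elg} \frac{1}{1-y}\cdot\frac{1}{2y}\,dy\,\bigl(1 + o(1)\bigr) = \frac{1}{2\pi}\log{n} + O(\log{\log{n}}),
\]
which furnishes the main term for both (i) and (ii). On the outer range $(0, 1-\elg)$ the bound \eqref{F1bound} together with an analogue of Lemma 2.1 of \cite{Matayoshi} gives $O(\log{\log{n}})$; on the sliver $(1-\lgn, 1)$ the crude estimate $\sqrt{\tilde A\tilde C-\tilde B^2}/\tilde A \leq c/(1-z)$ combined with the interval length $\lgn$ yields $O(\log{\log{n}})$ as well. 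The same argument with $z \mapsto -z$ and $f(0)$ replaced by $f(\pi)$ (invoking \eqref{ABC-cont}--\eqref{-ac-b2diff} in place of \eqref{ABC+cont}--\eqref{ac-b2diff}) handles $(-\infty, -1)$. The main obstacle is verifying cleanly that the $z^{2n}$ damping yields a uniformly $o(1)$ exponent on the central range; this is exactly where the growth restriction $K^2 = o(n/\log{\log{n}})$ is deployed in an essential way, and once it is established the remaining calculation is the $K=0$ limit of Lemma \ref{-1to1}.
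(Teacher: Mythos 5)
Your overall strategy is the right one and is essentially the paper's: substitute $x=1/z$, observe that the exponent acquires a factor $z^{2n}$ which destroys the $K$-dependence, and extract the main term $\frac{1}{2\pi}\log n$ from the central range $y=1-z\in(\lgn,\elg)$. However, three steps fail as written. First, the residual factors $\tilde A,\tilde B,\tilde C$ are \emph{not} simply the transported versions of \eqref{ABC+cont}--\eqref{ABC+}: in \eqref{Binf} and \eqref{Cinf} the terms carrying the factor $-(n+1)$ dominate near $z=1$, so that in fact $\tilde C \sim \frac{2(n+1)^2 f(0)}{y}\arctan\left(\frac{g(y)}{y}\right)$ (the paper's \eqref{cinf}) rather than $\frac{f(0)}{y^3}\arctan\left(\frac{g(y)}{y}\right)$, and the leading $(n+1)^2$ contributions of $\tilde A\tilde C$ and $\tilde B^2$ cancel — this is the ``tedious algebra'' needed to reach $\tilde A\tilde C-\tilde B^2\sim \frac{f^2(0)}{y^4}\arctan^2\left(\frac{g(y)}{y}\right)$. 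Consequently the exponent is of order $K^2(n+1)^2(1-y)^{2n}y^3$, not $\frac{K^2 z^{2n}y}{2f(0)\arctan(g(y)/y)}$. Second, even for your own expression, bounding $z^{2n}$ by its maximum $(\log n)^{-2}$ and $y$ by $\elg$ \emph{separately} gives only $o\!\left(\frac{n}{(\log\log n)(\log n)^3}\right)$, which tends to infinity; you must maximize the product jointly, using that $(1-y)^{2n}y^j$ is decreasing on $(\lgn,\elg)$ (its critical point lies to the left of $\lgn$), so the maximum occurs at $y=\lgn$ and the exponent is indeed uniformly $o(1)$. The conclusion survives, but the justification given does not establish it.

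Third, your treatment of the sliver $(1-\lgn,1)$ is not valid: the bound $\sqrt{\tilde A\tilde C-\tilde B^2}/\tilde A\le c/(1-z)$ multiplied by the interval length makes no sense since $\int_{1-\lgn}^{1}\frac{dz}{1-z}$ diverges; one needs a bound of order $n$ on the density there. The paper avoids both this and the outer range $(0,1-\elg)$ entirely by a cleaner device for the upper bound: since the exponential factor in $F_1$ is at most $1$, the integral of $F_1$ over all of $(1,\infty)$ (resp.\ $(-\infty,-1)$) is dominated by the Kac density for real zeros, and Theorem 1.1 of \cite{Matayoshi} then gives $\int_1^\infty F_1\,dx\le \frac{1}{2\pi}\log n+O(\log\log n)$ in one stroke; the central-range computation is only needed as a matching \emph{lower} bound. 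I would recommend adopting that two-sided (upper bound by the $K=0$ count, lower bound from the central range) structure, which renders your outer-range and sliver estimates unnecessary.
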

\begin{proof}
We will prove the result assuming that $f\in\diff$; the resulting argument will require only a few minor changes to prove the claim for $f\in\cont$.  As in Lemma \ref{-1to1}, this will be done by bounding the true asymptotic value between an upper and a lower bound.  To start, we have the inequality 
\begin{equation*}
 \int_{1}^{\infty} F_1dx \leq \frac{1}{\pi}\int_{1}^{\infty}\frac{\sqrt{A(x)C(x) - B^2(x)}}{A(x)}dx.
\end{equation*}
Notice that the expression on the right is simply the expected number of real zeros of $P_n(x)$ on $(1,\infty)$.  Similarly,
\begin{equation*}
 \int_{-\infty}^{-1} F_1dx \leq \frac{1}{\pi}\int_{-\infty}^{-1}\frac{\sqrt{A(x)C(x) - B^2(x)}}{A(x)}dx,
\end{equation*}
where now the expression on the right is the expected number of real zeros of $P_n(x)$ on $(-\infty,-1)$.  Thus, Theorem 1.1 in \cite{Matayoshi} yields the upper bounds
\begin{equation} \label{F1infupper}
\begin{split}
\int_1^{\infty}F_1dx &\leq \frac{1}{2\pi}\log{n} + O\left(\log{\log{n}}\right),\\
\int_{-\infty}^{-1}F_1dx &\leq \frac{1}{2\pi}\log{n} + O\left(\log{\log{n}}\right).
\end{split}
\end{equation}
The rest of the proof will be devoted to the derivation of a lower bound.

Consider the interval $(1 - \elg, 1 - \lgn)$.  Let $z=1-y$, and recall that $g(y)=y\frac{\log{n}}{\log{\log{n}}}$.  We will next need to make use of the asymptotic formulas
\begin{equation} \label{asymp}
\begin{split}
\int_{-\pi}^{\pi}\frac{f(\phi)d\phi}{\left(1-ze^{i\phi}\right)\left(1-ze^{-i\phi}\right)} &= \frac{2f(0)}{y}\arctan{\left(\frac{g(y)}{y}\right)} + O\left(\frac{1}{g(y)}\right),\\
\int_{-\pi}^{\pi}\frac{f(\phi)d\phi}{\left(1-ze^{i\phi}\right)\left(1-ze^{-i\phi}\right)^2} &= \frac{f(0)}{y^2}\arctan{\left(\frac{g(y)}{y}\right)} + O\left(\frac{1}{yg(y)}\right),\\
\int_{-\pi}^{\pi}\frac{f(\phi)d\phi}{\left(1-ze^{i\phi}\right)^2\left(1-ze^{-i\phi}\right)^2} &= \frac{f(0)}{y^3}\arctan{\left(\frac{g(y)}{y}\right)} + O\left(\frac{1}{y^2g(y)}\right),
\end{split}
\end{equation}
which are derived in the proof of Lemma 3.1 in \cite{Matayoshi}.  Combining \eqref{asymp} with \eqref{Ainf}, \eqref{Binf}, and \eqref{Cinf}, and after some tedious algebra, we can obtain the expression
\begin{equation*}
\begin{split}
 & A\left(\frac{1}{z}\right)C\left(\frac{1}{z}\right)-B^2\left(\frac{1}{z}\right)\\
 &= z^{-4n+2}\Bigg[\int_{-\pi}^{\pi}\frac{f(\phi)d\phi}{\left(1-ze^{i\phi}\right)\left(1-ze^{-i\phi}\right)} \cdot \int_{-\pi}^{\pi} 											\frac{f(\phi)d\phi}{\left(1-ze^{i\phi}\right)^2\left(1-ze^{-i\phi}\right)^2}\\
 & 	\quad			    - \left(\int_{-\pi}^{\pi}\frac{f(\phi)d\phi}{\left(1-ze^{i\phi}\right)\left(1-ze^{-i\phi}\right)^2}\right)^2\\
 &  \quad + O\left((n+1)z^{n+1} \int_{-\pi}^{\pi}\frac{f(\phi)d\phi}{\left(1-ze^{i\phi}\right)\left(1-ze^{-i\phi}\right)} \cdot  													\int_{-\pi}^{\pi}\frac{f(\phi)d\phi}{\left(1-ze^{i\phi}\right)\left(1-ze^{-i\phi}\right)^2}\right) \Bigg]\\
 &= (1-y)^{-4n+2}\left[\frac{f^2(0)}{y^4}\arctan^2{\left(\frac{g(y)}{y}\right)} + O\left(\frac{1}{y^3g(y)}\right)\right].			 
\end{split}
\end{equation*}
Thus,
\begin{equation} \label{abcinf}
\frac{\sqrt{A\left(\frac{1}{z}\right)C\left(\frac{1}{z}\right)-B^2\left(\frac{1}{z}\right)}}{A\left(\frac{1}{z}\right)} = (1-y)\left[\frac{1}{2y} + O\left(\frac{1}{g(y)}\right)\right].
\end{equation}
Also, if we refer to \eqref{asymp} once more,
\begin{equation} \label{cinf}
C\left(\frac{1}{z}\right) \sim (1-y)^{-2n+2}\frac{2(n+1)^2f(0)}{y}\arctan{\left(\frac{g(y)}{y}\right)}.
\end{equation}
Applying \eqref{fullKac} we then have
\begin{flalign*}
&\int_{1}^{\infty} F_1dx=\\
&= \frac{1}{\pi}\int_{0}^{1}\frac{1}{z^2}\frac{\sqrt{A\left(\frac{1}{z}\right)C\left(\frac{1}{z}\right)-B^2\left(\frac{1}{z}\right)}}{A\left(\frac{1}{z}\right)}\exp\left(-\frac{K^2C\left(\frac{1}{z}\right)}{2\left(A\left(\frac{1}{z}\right)C\left(\frac{1}{z}\right) - B^2\left(\frac{1}{z}\right)\right)}\right)dz\\
&\geq \frac{1}{\pi}\int_{1-\elg}^{1-\lgn}\frac{1}{z^2}\frac{\sqrt{A(\frac{1}{z})C(\frac{1}{z}) - B^2(\frac{1}{z})}} {A(\frac{1}{z})}\exp\left(-\frac{K^2C(\frac{1}{z})}{2\left(A(\frac{1}{z})C(\frac{1}{z}) - B^2(\frac{1}{z})\right)}\right)dz\\
			 &= \frac{1}{\pi}\int_{\lgn}^{\elg}\Bigg[\frac{1}{2y(1-y)}\left[1 + O\left(K^2(n+1)^2(1-y)^{2n}y^3\right)\right] + O\left(\frac{1}{g(y)}\right)\Bigg]dy\\
			 &= \frac{1}{2\pi}\log{n} + O\left(\log{\log{n}}\right).
\end{flalign*}
Noting that almost the exact same argument holds for $-z$,
\begin{equation*}
\int_{-\infty}^{-1} F_1dx \geq \frac{1}{2\pi}\log{n} + O\left(\log{\log{n}}\right),
\end{equation*}
as well.  Combined with \eqref{F1infupper}, the claim then follows.
\end{proof}

\begin{proof}[Proof of Theorem \ref{Kcrossings}]
Combining the results of Lemmas \ref{F2bound}, \ref{Kbound}, \ref{-1to1}, \ref{F2infbound}, and \ref{infcross}, Theorem \ref{Kcrossings} now follows.
\end{proof}

\section{Acknowledgments}
The author would like to thank his thesis advisor, Professor Michael Cranston, for his guidance and support with the subject.  The author is also grateful to Professor Stanislav Molchanov for suggesting the idea of using the spectral density of the covariance function.  Finally, a thank you is owed to Mr. Phillip McRae for reading through a copy of this manuscript.


\begin{thebibliography}{9}

\bibitem{Brei}
	L. Breiman,
	\emph{Probability},
	Addison-Wesley Publishing Company, Reading, Mass., 1968. MR0229267
	
\bibitem{CramLead}
	H. Cram\'{e}r and M.R. Leadbetter,
	\emph{Stationary and Related Stochastic Processes},
	John Wiley and Sons Inc., New York, 1967. MR0217860

\bibitem{Farahmand86}
	K. Farahmand,
	\emph{On the average number of real roots of a random algebraic equation},
	Ann. Probab. \textbf{14} (1986), 702-709. MR0832032 
	
\bibitem{Farahmand862}
  ------,
  \emph{On the average number of real roots of a random algebraic equation with dependent coefficients},
  J. Indian Math. Soc. \textbf{50} (1986), 49-58. MR0989014
  
\bibitem{Farahmand}
	------,
	\emph{Topics in Random Polynomials},
	Longman, Harlow, 1998. MR1679392
	
\bibitem{Matayoshi}
	J. Matayoshi,
	\emph{The real zeros of a random polynomial with dependent coefficients},
  arXiv:0906.1996v1
  
\bibitem{Sam77}
  M. Sambandham,
  \emph{On a random algebraic equation},
  J. Indian Math. Soc. (N.S.) \textbf{41} (1977), no. 1-2, 83-97. MR0651565
    					
\end{thebibliography}
\end{document}